\newtheorem{theorem}{Theorem}
\newtheorem{proposition}[theorem]{Proposition}
\DeclareMathOperator{\Aut}{Aut}
\newcommand{\NN}{\mathbb{N}}
\newcommand{\dd}{\;\mathrm{d}}
\begin{document}
\title{No additional tournaments are quasirandom-forcing\thanks{The first, third and seventh authors were supported by the European Research Council (ERC) under the European Union's Horizon 2020 research and innovation programme (grant agreement No 648509). The second, third and sixth authors were supported by the MUNI Award in Science and Humanities of the Grant Agency of Masaryk University. The fourth author was supported by CAPES. This publication reflects only its authors' view; the European Research Council Executive Agency is not responsible for any use that may be made of the information it contains.}}

\date{}
\author{Robert Hancock \and Adam Kabela \and Daniel Kr\'al' \and Ta\'isa Martins \and Roberto Parente \and Fiona Skerman \and Jan Volec}

\newcommand{\Addresses}{{
  \bigskip
  \footnotesize

\noindent Robert Hancock, \textsc{Institut f\"ur Informatik, Heidelberg University, Im Neuenheimer Feld 205, 69120, Heidelberg, Germany.}
\textit{Previous affiliation:} \textsc{Faculty of Informatics, Masaryk University, Botanick\'a 68A, 602 00 Brno, Czech Republic.}
\par\nopagebreak \textit{E-mail:} \texttt{hancock@informatik.uni-heidelberg.de}

\medskip

\noindent Adam Kabela, \textsc{Faculty of Applied Sciences, University of West Bohemia, Pilsen, Czech Republic.}
\textit{Previous affiliation:} \textsc{Faculty of Informatics, Masaryk University, Brno, Czech Republic.}
\par\nopagebreak \textit{E-mail:} \texttt{kabela@kma.zcu.cz}

\medskip

\noindent Daniel Kr\'al', \textsc{Faculty of Informatics, Masaryk University, Botanick\'a 68A, 602 00 Brno, Czech Republic.}
\textit{Previous affiliation:} \textsc{Mathematics Institute, DIMAP and Department of Computer Science, University of Warwick, Coventry CV4 7AL, UK.}
\par\nopagebreak \textit{E-mail:} \texttt{dkral@fi.muni.cz}

\medskip

\noindent Ta\'isa Martins, \textsc{Instituto de Matem\'atica e Estat\'istica, Universidade Federal Fluminense, Niter\'oi, Brazil.}
\par\nopagebreak \textit{E-mail:} \texttt{tlmartins@id.uff.br}

\medskip

\noindent Roberto Parente, \textsc{Instituto de Computa\c c\~ao, Universidade Federal da Bahia, Salvador, Brazil.}
\par\nopagebreak \textit{E-mail:} \texttt{roberto.parente@ufba.br}

\medskip

\noindent Fiona Skerman, \textsc{Mathematics Institute, Uppsala University, Uppsala, Sweden.}
\textit{Previous affiliation:} \textsc{Faculty of Informatics, Masaryk University, Botanick\'a 68A, 602 00 Brno, Czech Republic.}
\par\nopagebreak \textit{E-mail:} \texttt{fiona.skerman@math.uu.se}

\medskip

\noindent Jan Volec, \textsc{Department of Mathematics, Faculty of Nuclear Sciences and Physical Engineering, Czech Technical University in Prague, Trojanova 13, 120 00 Prague, Czech Republic.}
\textit{Previous affiliation:} \textsc{Faculty of Informatics, Masaryk University, Botanick\'a 68A, 602 00 Brno, Czech Republic.}
\par\nopagebreak \textit{E-mail:} \texttt{jan@ucw.cz}
}}

\maketitle
\begin{abstract}
A tournament $H$ is quasirandom-forcing if the following holds
for every sequence $(G_n)_{n\in\NN}$ of tournaments of growing orders:
if the density of $H$ in $G_n$ converges to 
the expected density of $H$ in a random tournament,
then $(G_n)_{n\in\NN}$ is quasirandom.
Every transitive tournament with at least $4$ vertices is quasirandom-forcing,
and Coregliano et al.~[Electron. J. Combin. 26 (2019), P1.44] showed that
there is also a non-transitive $5$-vertex tournament with the property.
We show that no additional tournament has this property.
This extends the result of Buci\'c et al.~[Combinatorica 41 (2021), 175--208] that
the non-transitive tournaments with seven or more vertices do not have this property.
\end{abstract}

\section{Introduction}
\label{sec:intro}

A combinatorial structure is said to be \emph{quasirandom}
if it has properties that a random structure would have asymptotically almost surely.
The notion of~\emph{quasi\-random graphs} goes back
to the works of R\"odl~\cite{Rod86}, Thomason~\cite{Tho87,Tho87b} and
Chung, Graham and Wilson~\cite{ChuGW89} from the 1980s.
There is a long series of results concerning quasirandomness of many other kinds of combinatorial structures,
for example 
groups~\cite{Gow08},
hypergraphs~\cite{ChuG90,ChuG91s,Gow06,Gow07,HavT89,KohRS02,NagRS06,RodS04},
permutations~\cite{ChaKNPSV20,Coo04,KraP13},
subsets of integers~\cite{ChuG92}, etc.
In the present short paper,
we consider quasirandomness of tournaments as studied in~\cite{BucLSS19,ChuG91,CorR17};
several equivalent definitions of this notion can be found in~\cite{ChuG91}.

One of the classical results on quasirandom graphs~\cite{ChuGW89,Rod86,Tho87} asserts that
an $n$-vertex graph with edge density $p$ is quasirandom if
it has $3\binom{n}{4}p^4+o(n^4)$ cycles of length four,
i.e., if the number of $4$-cycles is close to its expected value in a random graph with edge density~$p$.
Skokan and Thoma~\cite{SkoT04} showed that any complete bipartite graph $K_{a,b}$ with $a,b\ge2$ has the analogous property,
i.e., a graph is quasirandom if the number of copies of $K_{a,b}$ is close to its expected value in a random graph with the same edge density.
One of the major open problems in extremal combinatorics is the
Forcing Conjecture by Conlon, Fox and Sudakov~\cite{ConFS10} asserting that all bipartite graphs with a
cycle have this property.

We are interested in the same phenomenon for tournaments:
a tournament $H$ is \emph{quasirandom-forcing}
if the density of $H$ in $(G_n)_{n\in\NN}$ converging to the expected density of $H$ in a random tournament
is sufficient to guarantee the quasirandomness of the sequence.
In particular, if the density of $H$ converges to its expected density,
then the density of every tournament converges to its expected density in a random tournament.
Every transitive tournament $T_k$ with $k\ge 4$ vertices
is known to be quasirandom-forcing, see~\cite{CorR17} and \cite[Exercise 10.44]{Lov93}, and
Buci\'{c}, Long, Shapira and Sudakov~\cite{BucLSS19} observed that
every quasirandom-forcing tournament with seven or more vertices is transitive.
On the other hand, Coregliano, Parente and Sato~\cite{CorPS19} showed that
there is a non-transitive $5$-vertex tournament $F_5$ that is quasirandom-forcing;
the tournament $F_5$, which is called $T^{8}_{5}$ in~\cite{CorPS19}, is depicted in Figure~\ref{fig:F5}.
Our main result asserts that there is no quasirandom-forcing tournament
in addition to $T_k$, for $k\ge 4$, and $F_5$.

\begin{figure}
\begin{center}
\epsfbox{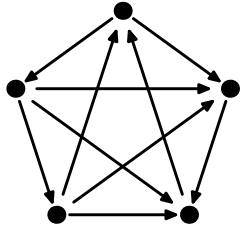}
\end{center}
\caption{The non-transitive tournament $F_5$ on $5$ vertices that is quasirandom-forcing.}
\label{fig:F5}
\end{figure}

The paper is structured as follows.
In Section~\ref{sec:prelim}, we recall from~\cite{BeiH65} classical results on the maximum numbers of cycles of length three and four in a tournament,
which rule out the existence of a strongly connected quasirandom-forcing tournament with at most $4$ vertices.
In Section~\ref{sec:general}, we first show that every non-transitive quasirandom-forcing tournament must be strongly connected,
hence we may focus on tournaments with $5$ and $6$ vertices only.
We next show that every quasirandom-forcing $6$-vertex tournament must be rigid and twin-free, which 
together with the results of Coregliano et al.~\cite{CorPS19} leaves a single $5$-vertex tournament
and exactly $14$ tournaments with $6$ vertices that are strongly connected and may be quasirandom-forcing.
We analyze all these $15$ tournaments in~Section~\ref{sec:constructions}.

\section{Preliminaries}
\label{sec:prelim}

In this section, we introduce notation and basic results used in the paper.
We write $[k]$ for the set $\{1,\ldots,k\}$.
A \emph{tournament} is a graph $G$
where each pair of vertices is joined by an edge oriented in one or the other direction;
we write $\lvert G\rvert$ for the number of vertices of $G$.
The \emph{adjacency matrix} of a tournament is the matrix $A$ with rows and columns indexed by the vertices of $G$ such that
its diagonal entries are zero, and $A_{uv}=1$ and $A_{vu}=0$ for every edge $uv$.
A tournament is \emph{rigid} if it has no non-trivial automorphism.
Two vertices $u$ and $v$ in a tournament are referred to as \emph{twins}
if every out-neighbor of $u$ possibly except for $v$ is an out-neighbor of $v$ and
every out-neighbor of $v$ possibly except for $u$ is an out-neighbor of $u$.
A tournament with no twins is said to be \emph{twin-free}.

If $G$ and $H$ are tournaments,
the \emph{density} of $H$ in $G$, which is denoted by $d(H,G)$,
is the probability that $\lvert H\rvert$ randomly chosen vertices of $G$ induce $H$;
if $\lvert H\rvert>\lvert G\rvert$, we set $d(H,G)=0$.
A sequence $(G_n)_{n\in\NN}$ of tournaments is \emph{quasirandom} if
\[\lim_{n\to\infty} d(H,G_n)=\frac{k!}{\lvert\Aut(H)\rvert}\cdot 2^{-\binom{k}{2}}\]
for every tournament $H$,
where $\Aut(H)$ is the group of automorphisms of $H$ (note that the right side of the expression
is the expected density of $H$ in a random tournament with $n\ge\lvert H\rvert$ vertices).
Finally, we say that a tournament $H$ is \emph{quasirandom-forcing}
if every sequence $(G_n)_{n\in\NN}$ of tournaments satisfying
\[\lim_{n\to\infty} d(H,G_n)=\frac{k!}{\lvert\Aut(H)\rvert}\cdot 2^{-\binom{k}{2}}\]
is quasirandom
(only sequences satisfying $|G_n|\to\infty$ as $n\to\infty$ are considered).
As we mentioned in Section~\ref{sec:intro},
every $k$-vertex transitive tournament $T_k$, for $k\ge 4$, is quasirandom-forcing, and
there is also a $5$-vertex strongly connected tournament that is quasirandom-forcing (this is the tournament $F_5$ depicted in Figure~\ref{fig:F5}).

We treat quasirandomness of tournaments in the language of theory of combinatorial limits,
which associates (convergent) sequences of combinatorial structures with analytic limit objects.
We refer the reader to the monograph by Lov\'asz~\cite{Lov12} for the treatment of the most studied case of graph limits,
which readily translates to the setting of tournament limits (see~\cite{DiaJ08,Tho18,ZhaZ20}).

We say that a sequence $(G_n)_{n\in\NN}$ of tournaments with $\lvert G_n\rvert$ tending to infinity
is \emph{convergent} if $d(H,G_n)$ converges for every tournament $H$.
A \emph{tournamenton} $W$ is a measurable function $[0,1]^2\to [0,1]$ such that $W(x,y)+W(y,x)=1$ for all $(x,y)\in [0,1]^2$.
The \emph{density} of a $k$-vertex tournament $H$ with vertices $v_1,\ldots,v_k$ in a tournamenton $W$,
which is denoted by $d(H,W)$, is
\begin{equation}\label{eq:d(H,W)}
d(H,W)=\frac{k!}{\lvert\Aut(H)\rvert}\;\int\limits_{[0,1]^k}\prod_{\overrightarrow{v_iv_j}\in E(H)}W(x_i,x_j)\dd x_1\cdots\dd x_k,
\end{equation}
where $E(H)$ is the set of (oriented) edges of $H$.
For every convergent sequence $(G_n)_{n\in\NN}$ of tournaments,
there exists a tournamenton $W$ such that
the limit density of each tournament $H$ in the sequence is equal to the density of $H$ in $W$;
we say that such $W$ is a \emph{limit} of the sequence $(G_n)_{n\in\NN}$ and
that the sequence $(G_n)_{n\in\NN}$ \emph{converges} to $W$.
Conversely, for every tournamenton $W$, there exists a sequence of tournaments that converges to $W$.

The definition of a quasirandom-forcing tournament translates to the limit setting as follows.

\begin{proposition}
\label{prop:qforce}
A tournament $H$ is quasirandom-forcing if
every tournamenton $W$ satisfying
\[d(H,W)=\frac{k!}{\lvert\Aut(H)\rvert}\cdot 2^{-\binom{k}{2}}\]
is equal to $1/2$ almost everywhere.
\end{proposition}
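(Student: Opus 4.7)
The plan is a standard compactness argument translated from the graph-limit setting to tournaments. First, I would reinterpret quasirandomness of a sequence $(G_n)$ in terms of tournamentons: plugging $W\equiv 1/2$ into~(\ref{eq:d(H,W)}) and noting that any $k$-vertex tournament has exactly $\binom{k}{2}$ oriented edges yields $d(H,W)=\frac{k!}{\lvert\Aut(H)\rvert}\,2^{-\binom{k}{2}}$ for every tournament $H$. Hence a convergent sequence $(G_n)$ is quasirandom if and only if it converges to the tournamenton that is $1/2$ almost everywhere.

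Now assume every tournamenton $W$ with $d(H,W)=\frac{k!}{\lvert\Aut(H)\rvert}\,2^{-\binom{k}{2}}$ is $1/2$ almost everywhere, and let $(G_n)_{n\in\NN}$ be any sequence of tournaments with $\lvert G_n\rvert\to\infty$ and $d(H,G_n)\to \frac{k!}{\lvert\Aut(H)\rvert}\,2^{-\binom{k}{2}}$. Suppose for contradiction that $(G_n)_{n\in\NN}$ is not quasirandom. Then there exist a tournament $H'$ on $k'$ vertices, some $\varepsilon>0$, and an infinite subsequence along which
\[
\left\lvert d(H',G_n)-\tfrac{k'!}{\lvert\Aut(H')\rvert}\,2^{-\binom{k'}{2}}\right\rvert\ge\varepsilon.
\]
By the compactness of the space of tournamentons (the tournament analog of the standard graph-limit compactness, cf.~\cite{Lov12,ZhaZ19}), I would extract a further sub-subsequence that converges to some tournamenton $W$. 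Continuity of density under convergence gives $d(H,W)=\frac{k!}{\lvert\Aut(H)\rvert}\,2^{-\binom{k}{2}}$, so by hypothesis $W\equiv 1/2$ almost everywhere. But then $d(H',G_n)\to d(H',W)=\frac{k'!}{\lvert\Aut(H')\rvert}\,2^{-\binom{k'}{2}}$ along the sub-subsequence, contradicting the $\varepsilon$-separation.

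There is no real obstacle here: the proposition is essentially a repackaging of the definition of quasirandom-forcing through the compactness and continuity properties of tournament limits. The only step needing a small verification is the identity $d(H,1/2)=\frac{k!}{\lvert\Aut(H)\rvert}\,2^{-\binom{k}{2}}$ used to identify the constant $1/2$ tournamenton with the quasirandom limit, which is immediate from~(\ref{eq:d(H,W)}); everything else is black-box compactness plus a one-line contradiction.
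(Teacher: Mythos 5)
Your proof is correct: the diagonal/compactness extraction of a convergent sub-subsequence, the continuity of $d(H,\cdot)$ under convergence, and the identity $d(H,1/2)=\frac{k!}{\lvert\Aut(H)\rvert}2^{-\binom{k}{2}}$ are exactly the standard ingredients needed here. The paper states Proposition~\ref{prop:qforce} without proof, treating it as an immediate translation of the definition via the theory of tournament limits, and your argument is precisely the one it implicitly relies on.
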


Proposition~\ref{prop:qforce} yields the following,
which was also noted at the end of Section~2 in~\cite{BucLSS19}.
We state the proposition in the language of combinatorial limits.

\begin{proposition}
\label{prop:qlarge}
Let $H$ be a tournament that is not transitive.
If there exists a tournamenton $W$ such that $W$ is not equal to $1/2$ almost everywhere and
\[d(H,W) \geq \frac{k!}{\lvert\Aut(H)\rvert}\cdot 2^{-\binom{k}{2}},\]
then $H$ is not quasirandom-forcing.
\end{proposition}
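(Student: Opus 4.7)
The plan is to apply Proposition~\ref{prop:qforce} in its contrapositive form: to prove that $H$ is not quasirandom-forcing, it suffices to exhibit a tournamenton $W^{\star}$ that is not equal to $1/2$ almost everywhere and satisfies $d(H, W^{\star}) = \frac{k!}{\lvert\Aut(H)\rvert}\cdot 2^{-\binom{k}{2}}$. If $d(H, W)$ already equals this target, I take $W^{\star}=W$ and stop, so the substantive case is the strict inequality $d(H,W)>\frac{k!}{\lvert\Aut(H)\rvert}\cdot 2^{-\binom{k}{2}}$.

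In the strict case, the main tool is a one-parameter interpolation between $W$ and the transitive tournamenton $W^{+}(x,y)=\mathbf{1}[x>y]$. Since $H$ is non-transitive, it admits no topological ordering, which yields $d(H,W^{+})=0$. The convex combination $W_\lambda := \lambda W + (1-\lambda)W^{+}$ is a tournamenton for $\lambda\in[0,1]$, and $\lambda\mapsto d(H, W_\lambda)$ is a polynomial taking the value $0$ at $\lambda=0$ and $d(H,W)$ at $\lambda=1$. Since the target lies strictly between these two values, the Intermediate Value Theorem yields some $\lambda^{\star}\in(0,1)$ with $d(H, W_{\lambda^{\star}})$ equal to the target.

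The main obstacle is to rule out the possibility that $W_{\lambda^{\star}}$ coincidentally equals $1/2$ almost everywhere, in which case it would fail to serve as $W^{\star}$. If this occurs, solving $\lambda^{\star} W + (1-\lambda^{\star})W^{+}\equiv 1/2$ forces $\lambda^{\star}\ge 1/2$ together with the specific pointwise values $W(x,y)=1-1/(2\lambda^{\star})\le 1/2$ almost everywhere on $\{x>y\}$ and $W(x,y)=1/(2\lambda^{\star})\ge 1/2$ almost everywhere on $\{x<y\}$. To circumvent this, I will run the analogous argument in parallel with the reverse transitive tournamenton $W^{-}(x,y)=\mathbf{1}[x<y]$, obtaining $\tilde\lambda^{\star}\in(0,1)$ with $d(H,\tilde W_{\tilde\lambda^{\star}})$ equal to the target for $\tilde W_\lambda := \lambda W + (1-\lambda)W^{-}$; the symmetric obstruction for $\tilde W_{\tilde\lambda^{\star}}\equiv 1/2$ would instead force $W\ge 1/2$ almost everywhere on $\{x>y\}$ and $W\le 1/2$ almost everywhere on $\{x<y\}$. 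The two bad conclusions sandwich $W$ from both sides, so their simultaneous occurrence would force $W\equiv 1/2$ almost everywhere, contradicting the hypothesis. Hence at least one of $W_{\lambda^{\star}}$ and $\tilde W_{\tilde\lambda^{\star}}$ provides the desired witness $W^{\star}$.
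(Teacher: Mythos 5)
Your proof is correct, but it takes a genuinely different route from the paper's. You interpolate \emph{pointwise}, via the convex combination $W_\lambda=\lambda W+(1-\lambda)W^{+}$, whereas the paper interpolates \emph{spatially}: it defines $U_\alpha$ to agree with $W$ on the subsquare $[0,\alpha]^2$ and with the transitive tournamenton $T$ everywhere else, then applies the intermediate value theorem to $\alpha\mapsto d(H,U_\alpha)$ exactly as you do to $\lambda\mapsto d(H,W_\lambda)$. The payoff of the paper's gluing is that the degeneracy you must work to exclude cannot arise at all: for $\alpha<1$ the tournamenton $U_\alpha$ is $\{0,1\}$-valued on a set of positive measure, and $U_1=W$, so $U_\alpha$ is never equal to $1/2$ almost everywhere and no case analysis is needed. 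Your convex combination can collapse to the constant $1/2$, and your repair---running the argument against both transitive orders $W^{+}$ and $W^{-}$ and noting that the two degeneracies together pin $W$ to $1/2$ on each of $\{x>y\}$ and $\{x<y\}$ (indeed they also force $\lambda^{\star}=\tilde\lambda^{\star}=1$, contradicting $\lambda^{\star},\tilde\lambda^{\star}\in(0,1)$)---is correct, just an extra step the paper's construction avoids. Two minor points: set $W^{+}(x,x)=1/2$ (as the paper does for $T$) so that $W^{+}(x,y)+W^{+}(y,x)=1$ holds on the diagonal and $W_\lambda$ is literally a tournamenton; and note that, like the paper, you are invoking the converse direction of Proposition~\ref{prop:qforce} (a tournamenton that is not $1/2$ almost everywhere yet achieves the random density witnesses non-forcing), which is the intended reading of that proposition even though it is stated as a one-way implication.
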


\begin{proof}
Let $W$ be the tournamenton given by the statement.
Let $T$ be the following tournamenton, which is a limit of a sequence of transitive tournaments:
\[T(x,y)=\begin{cases}
    1, & \mbox{if $x>y$,} \\
    1/2, & \mbox{if $x=y$, and} \\
    0, & \mbox{otherwise.}
    \end{cases}\]
Further, we define a $U_\alpha$ for $\alpha \in [0,1]$ as
\[U_\alpha(x,y)=\begin{cases}
    W(x,y), & \mbox{if $(x,y)\in [0,\alpha]^2$, and} \\
    T(x,y), & \mbox{otherwise.}
    \end{cases}\]
Observe that, for any $\alpha \in [0,1]$, $U_\alpha$ is not equal to $1/2$ almost everywhere.
Since the tournament $H$ is not transitive, we have $d(H,U_0)=d(H,T)=0$.
On the other hand, the assumption of the proposition yields that
\[d(H,U_1)=d(H,W) \geq \frac{k!}{\lvert\Aut(H)\rvert}\cdot 2^{-\binom{k}{2}}.\]
Since $d(H,U_{\alpha})$ is a continuous function of $\alpha\in [0,1]$,
there exists $\alpha\in (0,1]$ such that
\[d(H,U_{\alpha})=\frac{k!}{\lvert\Aut(H)\rvert}\cdot 2^{-\binom{k}{2}}.\]
\end{proof}

A classical result on tournaments of Beineke and Harary~\cite{BeiH65} on Tur\'an density of a cycle $C_3$ of length three translates to the language of tournament limits as follows:
$d(C_3,W)\le 1/4$ and the equality holds if and only if
\[\int_{[0,1]}W(x,y)\dd y=\frac{1}{2}\]
for almost every $x\in [0,1]$.
Hence, the cycle $C_3$ is not quasirandom-forcing by Proposition~\ref{prop:qforce}.
Let $C_4$ be the $4$-vertex tournament obtained from the cycle of length four by adding two edges (note that
all tournaments obtained in this way are isomorphic).
The result of~\cite{BeiH65} on the Tur\'an density of $C_4$, in the language of tournament limits,
asserts $d(C_4,W)\le 1/2$ and the equality can be attained.
Hence, the tournament $C_4$ is not quasirandom-forcing by Proposition~\ref{prop:qlarge}.

We next define a notion of a (weighted) step tournamenton,
which is analogous to the notion of a step graphon.
Informally speaking,
a step tournamenton represents a large tournament such that
its vertices can be split into a finite number of parts such that
the tournament is quasirandom within each part and between the parts.
The formal definition goes as follows.
A matrix $A$ is a \emph{tournament matrix} if it is a square matrix, say of order $k$, with non-negative entries such that
$A_{ij}+A_{ji}=1$ for all $i,j\in [k]$.
A vector $w$ is \emph{stochastic} if all its entries are non-negative and they sum to one.
Let $A$ be a $k\times k$ tournament matrix and $w$ a $k$-dimensional stochastic vector.
Further, let $V_1,\dots,V_k$ be a partition of $[0,1]$ into disjoint measurable sets such that
the measure of $V_i$ is $w_i$, $i\in [k]$.
We define a tournamenton $W[A,w]$ as
\[W[A,w](x,y)=A_{i,j}\]
for every $(x,y)\in (V_i,V_j)$. 
A tournamenton $W$ such that 
there exists a tournament matrix $A$ and a (positive) stochastic vector $w$ such that $W=W[A,w]$
is called a \emph{weighted step tournamenton}.
If $w_i=1/k$ for all $i\in [k]$, we simply write $W[A]$ instead of $W[A,w]$.
Finally, if $H$ is a tournament,
then the \emph{blow-up} of $H$ is the tournamenton $W[A]$
where $A$ is the adjacency matrix of $H$ with $1/2$ on its diagonal.

Observe that the following formula holds for the density of $H$ in $W[A,w]$:
\begin{equation}\label{eq:d(H,A)}
d(H,W[A,w])=\frac{1}{\lvert\Aut(H)\rvert}\sum_{f:V(H)\to [k]} \prod_{i \in V(H)} w_{f(i)} \prod_{\overrightarrow{v_iv_j}\in E(H)} A_{f(i),f(j)},
\end{equation}
where $k$ is the order of the matrix $A$.
The identity \eqref{eq:d(H,A)} leads us to define $d^*(H,A,w)$ as follows.
\begin{equation}\label{eq:d*(H,A)}
d^*(H,A,w)=\sum_{f:V(H) \to [k]}\prod_{i \in V(H)} w_{f(i)} \prod_{\overrightarrow{v_iv_j}\in E(H)} A_{f(i),f(j)}.
\end{equation}
Again,
if each entry of $w$ is equal to $1/k$, we will simply write $d^*(H,A)$ instead of $d^*(H,A,w)$.

By combining Proposition~\ref{prop:qlarge},
the definition of $d^*(H,A,w)$, and the identities \eqref{eq:d(H,W)} and \eqref{eq:d(H,A)},
we obtain the following.

\begin{proposition}
\label{prop:labelledineq}
Let $H$ be a $k$-vertex non-transitive tournament. 
If there exists an $\ell\times\ell$ tournament matrix $A$ and an $\ell$-dimensional positive stochastic vector $w$ such that
not all entries of $A$ are equal to $1/2$ and
\[ d^*(H,A,w) \geq 2^{-\binom{k}{2}},\]  
then $H$ is not quasirandom-forcing.
\end{proposition}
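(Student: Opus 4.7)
The plan is to reduce Proposition~\ref{prop:labelledineq} directly to Proposition~\ref{prop:qlarge} by taking as witness tournamenton the weighted step tournamenton $W := W[A,w]$ associated with the hypothesised matrix $A$ and stochastic vector $w$. The two things to check are that $W$ is not $1/2$ almost everywhere and that $d(H,W)$ meets the lower bound required by Proposition~\ref{prop:qlarge}; the rest is then automatic.

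For the first point, I would argue as follows. Fix a partition $V_1,\dots,V_\ell$ of $[0,1]$ into measurable sets with $|V_i|=w_i$ used in the definition of $W[A,w]$. Since $w$ is a \emph{positive} stochastic vector, every $V_i$ has positive measure, so every rectangle $V_i\times V_j$ has positive measure $w_iw_j>0$. By hypothesis some entry $A_{ij}$ differs from $1/2$, and on $V_i\times V_j$ the tournamenton $W$ is identically equal to this $A_{ij}$. Hence $W$ is not equal to $1/2$ almost everywhere.

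For the second point, I would simply combine the two identities already recorded in the excerpt. Substituting $W=W[A,w]$ into \eqref{eq:d(H,W)} and integrating class by class — each vertex $v_i$ of $H$ independently lands in a class $V_{f(i)}$, contributing the factor $w_{f(i)}$ to the measure and the factor $A_{f(i),f(j)}$ for each directed edge $\overrightarrow{v_iv_j}\in E(H)$ — converts the integral into exactly the sum defining $d^*(H,A,w)$ in \eqref{eq:d*(H,A)}. In other words, \eqref{eq:d(H,W)} and \eqref{eq:d(H,A)} together give
\[
d(H,W[A,w]) \;=\; \frac{k!}{|\Aut(H)|}\, d^*(H,A,w),
\]
and the hypothesis $d^*(H,A,w)\ge 2^{-\binom{k}{2}}$ then yields
\[
d(H,W) \;\ge\; \frac{k!}{|\Aut(H)|}\,2^{-\binom{k}{2}}.
\]

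Since $H$ is non-transitive and $W$ satisfies both hypotheses of Proposition~\ref{prop:qlarge}, that proposition immediately implies that $H$ is not quasirandom-forcing. There is essentially no obstacle: the statement is a bookkeeping consequence of the earlier propositions, and the content of the paper lies in producing, for each of the fourteen candidate $6$-vertex tournaments and the remaining $5$-vertex candidate, an explicit small tournament matrix $A$ and stochastic vector $w$ witnessing the hypothesis of this proposition in Section~\ref{sec:constructions}.
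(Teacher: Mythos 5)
Your proof is correct and is precisely the argument the paper compresses into the single sentence ``By combining Proposition~\ref{prop:qlarge}, the definition of $d^*(H,A,w)$, and the identities \eqref{eq:d(H,W)} and \eqref{eq:d(H,A)}, we obtain the following'' --- same witness $W[A,w]$, same two checks, same appeal to Proposition~\ref{prop:qlarge}. One remark: the identity $d(H,W[A,w])=\frac{k!}{\lvert\Aut(H)\rvert}\,d^*(H,A,w)$ that you derive directly from \eqref{eq:d(H,W)} is the correct one (it is exactly what makes the threshold $2^{-\binom{k}{2}}$ in the proposition work out), whereas the paper's displayed \eqref{eq:d(H,A)} is missing the factor $k!$; your version, not the printed one, is what the proof needs.
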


\section{General arguments}
\label{sec:general}

The purpose of this section is to establish the following two statements and
use them to show that most $6$-vertex tournaments are not quasirandom-forcing.

\begin{proposition}
\label{prop:sconnected}
Let $H$ be a non-transitive tournament.
If $H$ is not strongly connected, then $H$ is not quasirandom-forcing.
\end{proposition}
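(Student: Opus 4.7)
The plan is to apply Proposition~\ref{prop:labelledineq} with a two-part step tournamenton. List the strongly connected components of $H$ in a topological order $H_1,\dots,H_m$, so that every edge between $H_\alpha$ and $H_\beta$ with $\alpha<\beta$ goes from $H_\alpha$ to $H_\beta$; write $k_\alpha=|V(H_\alpha)|$. Non-strong-connectivity gives $m\ge 2$, non-transitivity gives $k_\alpha\ge 3$ for some $\alpha$, and since no strongly connected tournament has exactly two vertices, each $k_\alpha\in\{1,3,4,5,\dots\}$. I would take the $2\times 2$ tournament matrix $A$ with $A_{12}=1$, $A_{21}=0$, $A_{11}=A_{22}=1/2$ and the weights $w=(\lambda,1-\lambda)$ for a parameter $\lambda\in(0,1)$ to be chosen; the corresponding $W[A,w]$ is never $1/2$ almost everywhere.

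Next, I would analyse which maps $f\colon V(H)\to\{1,2\}$ give a nonzero term in $d^*(H,A,w)$. Since $A_{21}=0$, such $f$ must avoid any edge of $H$ from $f^{-1}(2)$ to $f^{-1}(1)$. Strong connectivity of each $H_\alpha$ forces $f$ to be constant on $V(H_\alpha)$, and the topological order forces these constants to be weakly increasing in $\alpha$. Thus the valid maps are parameterised by a cut $s\in\{0,1,\dots,m\}$ via $f^{-1}(1)=V(H_1)\cup\cdots\cup V(H_s)$. Putting $a_s=k_1+\cdots+k_s$, $b_s=k-a_s$, and using the identity $\binom{a_s}{2}+\binom{b_s}{2}=\binom{k}{2}-a_sb_s$, the definition of $d^*$ simplifies to
\[
d^*(H,A,w)\;=\;2^{-\binom{k}{2}}\,g(\lambda),\qquad g(\lambda)\;=\;\sum_{s=0}^{m}\lambda^{a_s}(1-\lambda)^{b_s}\,2^{a_sb_s}.
\]
By Proposition~\ref{prop:labelledineq}, it suffices to exhibit $\lambda\in(0,1)$ with $g(\lambda)\ge 1$; note $g(0)=g(1)=1$.

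A short case analysis then finishes. If $k_1=1$, a direct computation gives $g'(0)=2^{k-1}-k$, which is positive for $k\ge 3$, so $g>1$ just to the right of $0$. The mirror computation $g'(1)=k-2^{k-1}<0$ handles the case $k_m=1$ symmetrically. In the remaining regime $k_1,k_m\ge 3$ (so $k\ge k_1+k_m\ge 6$), I would take $\lambda=k_1/k$ and bound $g$ from below by its single $s=1$ term:
\[
g(k_1/k)\;\ge\;\bigl(k_1/k\bigr)^{k_1}\bigl(b_1/k\bigr)^{b_1}\,2^{k_1 b_1}.
\]
Applying $\log(k/x)\le k/x-1$ to $x\in\{k_1,b_1\}$ yields $k_1\log(k/k_1)+b_1\log(k/b_1)\le k$, reducing the inequality $g(k_1/k)\ge 1$ to $k_1 b_1\log 2\ge k$; since $k_1,b_1\ge 3$ imply $k_1 b_1\ge 3(k-3)\ge k/\log 2$ for $k\ge 6$, this settles the last case.

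The main obstacle is precisely this final ``no endpoint singleton'' regime: the boundary-derivative shortcut is unavailable, so one has to produce an explicit interior $\lambda$ at which $g\ge 1$. The entropy-style comparison above handles it uniformly, but verifying the inequality $k_1 b_1\log 2\ge k$ is where the combinatorial hypotheses (each $k_\alpha\in\{1,3,4,\dots\}$ and $m\ge 2$) genuinely enter.
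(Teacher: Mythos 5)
Your proposal is correct and follows essentially the same route as the paper: the same $2\times 2$ tournament matrix with entries $1$ and $0$ off the diagonal, the same variable weight vector $(\lambda,1-\lambda)$, a first-order expansion at the endpoints when an end component is a singleton, and an interior choice of $\lambda$ otherwise. The only real difference is in the last case, where the paper simply takes $\lambda=1/2$ and uses $k_1b_1\ge k_1+b_1$ (valid since $k_1,b_1\ge 2$), which avoids your entropy-style estimate; both versions are valid.
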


\begin{proposition}
\label{prop:twins}
Let $H$ be a non-transitive $6$-vertex tournament.
If $H$ contains twins or has a non-trivial automorphism,
then $H$ is not quasirandom-forcing.
\end{proposition}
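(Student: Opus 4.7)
The plan is to invoke Proposition~\ref{prop:labelledineq} with the $6\times 6$ tournament matrix $A$ equal to the adjacency matrix of $H$ with $1/2$'s on its diagonal, together with the uniform stochastic vector $w=(1/6,\ldots,1/6)$. Since the off-diagonal entries of $A$ are $0$ and $1$, $A$ is not equal to $1/2$ everywhere, so it suffices to show that
\[d^{*}(H,A,w)\ge 2^{-15}.\]

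Looking at~\eqref{eq:d*(H,A)}, the identity map $V(H)\to [6]$ contributes exactly $(1/6)^6$ to $d^{*}(H,A,w)$, since each of the $15$ edge factors equals $1$. The strategy is to exhibit, using whichever hypothesis of the proposition is available, additional terms contributing at least $(1/6)^6$ in total; together with the identity this will give
\[d^{*}(H,A,w)\ge \frac{2}{6^6}=\frac{1}{23328}>\frac{1}{32768}=2^{-15},\]
as required.

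If $H$ has a non-trivial automorphism $\sigma$, then $\sigma$, viewed as a bijection $V(H)\to [6]$, is a second edge-preserving map and therefore contributes another $(1/6)^6$. In the twin case with twins $u$ and $v$, I would instead use the two non-bijective maps obtained from the identity by setting $f(v):=u$ (respectively $f(u):=v$) while keeping the other four vertices fixed. The twin condition ensures that, for every $w\notin\{u,v\}$, the edge between $v$ and $w$ points in the same direction as the edge between $u$ and $w$; consequently, only the edge between $u$ and $v$ becomes a loop, contributing the diagonal entry $1/2$, while the remaining $14$ edges each contribute $1$. Each of these two maps therefore contributes $(1/6)^6\cdot (1/2)$, and together they supply the required extra $(1/6)^6$.

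The main point requiring care is the edge-by-edge verification in the twin case; this is a direct unpacking of the definition of twins, and the automorphism case is immediate. It is worth noting that twins do \emph{not} in general force a non-trivial automorphism (the swap $(u\,v)$ reverses the edge between $u$ and $v$), which is why the two halves of the hypothesis genuinely demand separate constructions.
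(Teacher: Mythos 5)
Your proposal is correct and follows essentially the same route as the paper: the paper also takes $A$ to be the adjacency matrix with $1/2$ on the diagonal, counts the identity map plus either a non-trivial automorphism (contributing $6^{-6}$) or the two twin-collapsing maps (each contributing $\tfrac12\cdot 6^{-6}$), and concludes $d^*(H,A)\ge 2\cdot 6^{-6}>2^{-15}$ before applying Proposition~\ref{prop:labelledineq}. Your edge-by-edge verification in the twin case is exactly the paper's observation that the innermost product equals $1/2$ for those two maps.
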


\begin{proof}[Proof of Proposition~\ref{prop:sconnected}]
Let $k$ be the number of vertices of $H$. Note that $k\ge 4$.
For simplicity, we will write $\rho$ for $2^{-\binom{k}{2}}$.
Since the tournament $H$ is not strongly connected,
its vertices can be split into non-empty sets $X_1$ and $X_2$ such that
all edges are oriented from $X_1$ to $X_2$;
let $k_1$ and $k_2$ be the sizes of $X_1$ and $X_2$, respectively.
For each $\alpha \in [0,1]$,
consider the following tournament matrix and stochastic vector
\begin{equation*}
A=\begin{pmatrix}
1/2 & 1 \\
0 & 1/2 \\
\end{pmatrix}
\mbox{ and }
w=(\alpha, 1-\alpha),
\end{equation*}
and set $W_\alpha=W[A,w]$.
Our aim is to find an appropriate $\alpha\in(0,1)$ so that we can apply Proposition~\ref{prop:labelledineq} to $W_\alpha$.
Observe that
\begin{equation}\label{eq:1}
d^*(H,A,w) \geq \alpha^k \cdot \rho + \alpha^{k_1}(1 - \alpha)^{k_2} \cdot 2^{k_1 k_2} \cdot \rho + (1-\alpha)^k \cdot \rho.
\end{equation}
Note that the inequality is strict if $H$ has more than two strongly connected components.
We show that $d^*(H,A,w) > \rho$ for some $\alpha \in (0, 1)$ in each of the following cases.

If $k_1 = 1$, we use the second and third term of~(\ref{eq:1}) to lower bound $d^*(H,A,w)$ as follows:
\begin{align*}
   d^*(H,A,w) &>
   \alpha(1 - \alpha)^{k_2} \cdot \rho \cdot 2^{k_2} + (1-\alpha)^k \cdot \rho\\
   &= \rho + \alpha \cdot (2^{k_2} - k) \rho + O(\alpha^2). 
\end{align*}
Since $k \geq 4$,
it holds that $2^{k_2}-k>0$, and
we conclude that $d^*(H,A,w) > \rho$ for some positive $\alpha$ that is sufficiently small.
The case $k_1 = k - 1$ is symmetric to the case $k_1=1$.
Hence, it remains to analyze the case when $2\le k_1\le k-2$.

If $2\le k_1\le k-2$, we set $\alpha = 1/2$. It follows from~(\ref{eq:1}) that
\begin{equation*}
   d^*(H,A,w) \ge 2^{-k} \cdot \rho +
   2^{-k_1} \cdot 2^{-k_2} \cdot 2^{k_1k_2} \cdot \rho
   + 2^{-k} \cdot \rho
   \geq \left(1 + 2^{1-k}  \right) \cdot \rho,
\end{equation*}
where the last inequality holds since $k_1k_2\ge k_1+k_2$.
This concludes the proof.
\end{proof}

We prove Proposition~\ref{prop:twins} by an argument similar to that
used in~\cite{BucLSS19} to observe that
every quasirandom-forcing tournament with seven or more vertices is transitive.

\begin{proof}[Proof of Proposition~\ref{prop:twins}]
Let $A$ be the adjacency matrix of $H$ with $1/2$ on its diagonal.
If $H$ has a non-trivial automorphism, then $d^*(H,A) \ge 2\cdot 6^{-6} > 2^{-15}$
as there are at least two choices of $f$ in the sum in \eqref{eq:d*(H,A)} for which the expression in the definition is non-zero.
It follows that $H$ is not quasirandom-forcing by Proposition~\ref{prop:labelledineq}.

We now consider the case that $H$ has twins.
Let $v_1,\ldots,v_6$ be the vertices of $H$ and 
assume by symmetry that $v_1$ and $v_2$ are the twins.
As in the previous case, it is enough to show that $d^*(H,A) \ge 2\cdot 6^{-6}$.
This time, observe that the innermost product in \eqref{eq:d*(H,A)} is equal to one for the map $f$ with $f(v_i)=i$ for all $i \in [6]$, and
it is equal to $1/2$ for the two maps $f$ satisfying $f(v_1)\in\{1,2\}$, $f(v_1)=f(v_2)$ and $f(v_i)=i$, where $i\in\{3,4,5,6\}$.
\end{proof}

Proposition~\ref{prop:sconnected} implies that
every quasirandom-forcing non-transitive tournament $H$ is strongly connected.
The classical results on the Tur\'an density of $C_3$ and $C_4$ (see the discussion of these results in Section~\ref{sec:prelim})
yield that there is no such tournament $H$ with three or four vertices, and
the observation of Buci\'{c} et al.~\cite{BucLSS19} yield that
there are no such tournaments $H$ with seven or more vertices.
Hence, we are left to analyze tournaments with five and six vertices.
In the case of $5$-vertex tournaments,
the results of Coregliano et al.~\cite{CorPS19} imply that
all $5$-vertex strongly connected tournaments with the possible exception of two tournaments are not quasirandom-forcing.
The two exceptional tournaments are $F_5$, which is depicted in Figure~\ref{fig:F5} and is quasirandom-forcing, and
$H_5$, which is depicted in Figure~\ref{fig:5in7} and is shown to be not quasirandom-forcing in the next section.

There are $55$ non-transitive tournaments on $6$ vertices,
out of which $20$ are not strongly connected, $29$ contain twins, and $15$ have a non-trivial automorphism (some tournaments have more than one of these properties);
see Table~\ref{tab:vertex6}.
A SageMath~\cite{sagemath} script that verifies the entries of Table~\ref{tab:vertex6}
is available as an ancillary file on arXiv associated with the arXiv version of this manuscript~\cite{ancillary}.

By the discussion in the previous paragraph, Propositions~\ref{prop:sconnected} and~\ref{prop:twins} yield that $41$ non-transitive $6$-vertex tournaments are not quasirandom-forcing.
We will analyze the remaining $14$ tournaments, which are depicted in Figure~\ref{fig:H6}, in the next section.

\begin{table}
\begin{center}
\scalebox{0.8}{$
\begin{array}{|lll|ll|ll|l|lll|ll|ll|l}
\cline{1-7}
\cline{9-15}
\mbox{A} & \mbox{B} & \mbox{C} & \mbox{D} & \mbox{E} & \mbox{Tournament} &&& \mbox{A} & \mbox{B} & \mbox{C} & \mbox{D} & \mbox{E} & \mbox{Tournament} &&\\
\cline{1-7}
\cline{9-15}
\bullet  &  \bullet  &  \bullet  &         &         & 00000,0000,000,01,0  &  &&
         &           &           & \bullet &         & 00110,0001,000,01,0  &  H_6^{2}  \\
\bullet  &           &  \bullet  &         &         & 00010,0000,000,00,0  &  &&
\bullet  &           &           &         &         & 00100,0010,000,00,0  & \\
         &           &  \bullet  &         &         & 00011,0000,000,00,0  &  &&
         &           &           & \bullet &         & 00101,0010,000,00,0  &  H_6^{3}  \\
         &           &  \bullet  &         &         & 00010,0001,000,00,0  &  &&
         &           &  \bullet  &         &         & 00100,0011,000,00,0  & \\
         &           &           &         & \bullet & 00010,0000,001,00,0  &  H_6^{1}   &&
         &           &           & \bullet &         & 00100,0010,001,00,0  &  H_6^{4}  \\
         &           &  \bullet  &         &         & 00010,0000,000,01,0  &  &&
         &           &           & \bullet &         & 00100,0010,000,01,0  &  H_6^{5}  \\
         &           &  \bullet  &         &         & 00010,0000,000,00,1  &  &&
         &           &           &         & \bullet & 00100,0010,000,00,1  &  H_6^{6}  \\
\bullet  &           &  \bullet  &         &         & 00000,0010,000,00,0  &  &&
         &  \bullet  &           &         &         & 00101,0010,001,00,0  & \\
         &           &  \bullet  &         &         & 00001,0010,000,00,0  &  &&
         &           &           &         & \bullet & 00100,0011,001,00,0  &  H_6^{7}  \\
\bullet  &           &  \bullet  &         &         & 00000,0011,000,00,0  &  &&
         &           &           & \bullet &         & 00100,0011,000,01,0  &  H_6^{8}  \\
\bullet  &           &           &         &         & 00000,0010,001,00,0  &  &&
\bullet  &  \bullet  &           &         &         & 00110,0010,000,00,0  & \\
\bullet  &           &           &         &         & 00000,0010,000,01,0  &  &&
         &           &           &         & \bullet & 00111,0010,000,00,0  &  H_6^{9}  \\
\bullet  &           &  \bullet  &         &         & 00000,0010,000,00,1  &  &&
         &           &  \bullet  &         &         & 00111,0011,000,00,0  & \\
\bullet  &  \bullet  &           &         &         & 00000,0011,001,00,0  &  &&
         &           &           & \bullet &         & 00111,0010,001,00,0  &  H_6^{10} \\
\bullet  &  \bullet  &  \bullet  &         &         & 00000,0000,010,00,0  &  &&
\bullet  &  \bullet  &  \bullet  &         &         & 00000,0100,000,00,0  & \\
         &  \bullet  &           &         &         & 00001,0000,010,00,0  &  &&
\bullet  &  \bullet  &           &         &         & 00010,0100,000,00,0  & \\
\bullet  &  \bullet  &           &         &         & 00000,0001,010,00,0  &  &&
         &  \bullet  &  \bullet  &         &         & 00011,0100,000,00,0  & \\
\bullet  &           &  \bullet  &         &         & 00000,0000,011,00,0  &  &&
         &           &           & \bullet &         & 00010,0101,000,00,0  &  H_6^{11} \\
\bullet  &           &  \bullet  &         &         & 00100,0000,000,00,0  &  &&
         &  \bullet  &           &         &         & 00010,0100,000,00,1  & \\
\bullet  &           &  \bullet  &         &         & 00110,0000,000,00,0  &  &&
\bullet  &  \bullet  &  \bullet  &         &         & 01000,0000,000,00,0  & \\
         &           &  \bullet  &         &         & 00111,0000,000,00,0  &  &&
\bullet  &  \bullet  &           &         &         & 01000,0000,000,01,0  & \\
         &           &  \bullet  &         &         & 00110,0001,000,00,0  &  &&
\bullet  &           &           &         &         & 01010,0000,000,00,0  & \\
         &           &  \bullet  &         &         & 00110,0000,001,00,0  &  &&
         &           &  \bullet  &         &         & 01011,0000,000,00,0  & \\
         &           &  \bullet  &         &         & 00110,0000,000,01,0  &  &&
         &           &           & \bullet &         & 01010,0001,000,00,0  &  H_6^{12} \\
         &           &  \bullet  &         &         & 00110,0000,000,00,1  &  &&
         &           &           & \bullet &         & 01010,0000,001,00,0  &  H_6^{13} \\
         &           &  \bullet  &         &         & 00111,0000,001,00,0  &  &&
         &           &           &         & \bullet & 01010,0000,000,01,0  &  H_6^{14} \\
         &  \bullet  &  \bullet  &         &         & 00110,0001,001,00,0  &  &&
         &           &  \bullet  &         &         & 01010,0000,000,00,1  & \\
         &  \bullet  &  \bullet  &         &         & 00111,0000,000,01,0  &  &&&&&&&&\\
\cline{1-7}
\cline{9-15}
\end{array}
$}
\end{center}
\caption{The table indicates for each 6-vertex non-transitive tournament the way in which it was shown to be not quasirandom-forcing as follows. A:~by~Proposition~\ref{prop:sconnected} because it is not strongly connected, B:~by~Proposition~\ref{prop:twins} because it has a non-trivial automorphism, C:~by~Proposition~\ref{prop:twins} because it has twins, D:~Subsection~\ref{subs:arg1}, and E:~Subsection~\ref{subs:arg2}. The tournaments are described by the upper-triangle part of their adjacency matrix, see the beginning of Section~\ref{sec:constructions}, and by the notation used for the tournament if a specific notation has been introduced.}
\label{tab:vertex6}
\end{table}

\section{Specific constructions}
\label{sec:constructions}

In this section we provide two different types of arguments
to rule out the remaining $15$ tournaments from being quasirandom-forcing.
Tournaments that we consider will be described by the upper-triangle part of their adjacency matrix,
i.e., if $A$ is the adjaceny matrix of a $k$-vertex tournament,
then the tournament is described by
\[[A_{1,2}\cdots A_{1,k},A_{2,3}\cdots A_{2,k},\ldots,A_{k-2,k-1}A_{k-2,k},A_{k-1,k}].\]
The remaining $5$-vertex tournament, which is depicted in Figure~\ref{fig:5in7},
is described by $[0010,001,00,0]$,
We denote this tournament $H_5$ (this tournament is called $T^{10}_{5}$ in~\cite{CorPS19}).
The $14$ remaining $6$-vertex tournaments,
which can also be found in Figure~\ref{fig:H6},
are the following:
\begin{align*}
 H^{1}_6: [00010,0000,001,00,0], & \quad  H^{2}_6: [00110,0001,000,01,0], \\
 H^{3}_6: [00101,0010,000,00,0], & \quad  H^{4}_6: [00100,0010,001,00,0], \\
 H^{5}_6: [00100,0010,000,01,0], & \quad  H^{6}_6: [00100,0010,000,00,1], \\
 H^{7}_6: [00100,0011,001,00,0], & \quad  H^{8}_6: [00100,0011,000,01,0], \\
 H^{9}_6: [00111,0010,000,00,0], & \quad H^{10}_6: [00111,0010,001,00,0], \\
H^{11}_6: [00010,0101,000,00,0], & \quad H^{12}_6: [01010,0001,000,00,0], \\
H^{13}_6: [01010,0000,001,00,0], & \quad H^{14}_6: [01010,0000,000,01,0]. 
\end{align*}

\begin{figure}
\begin{center}
\epsfbox{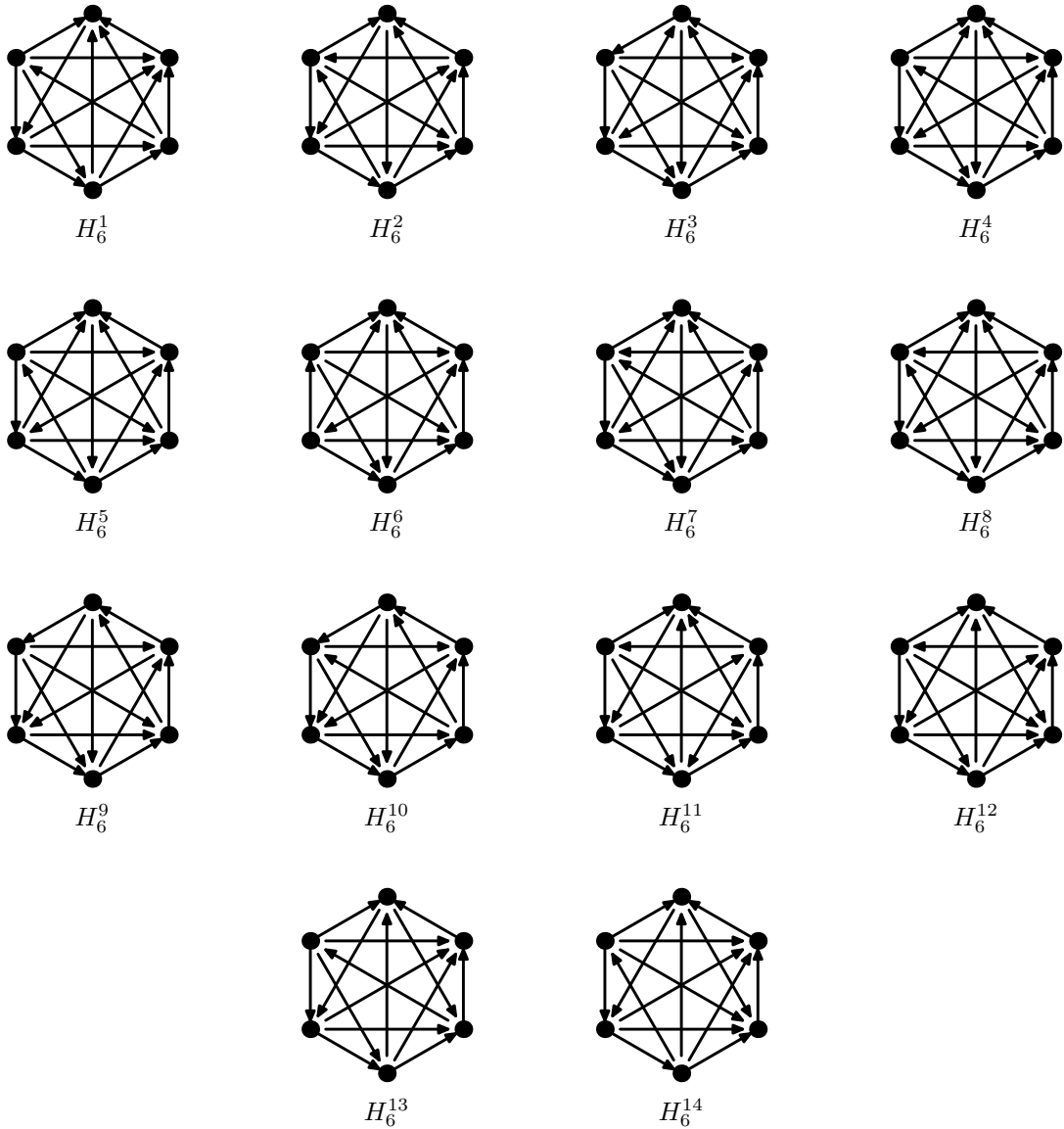}
\end{center}
\caption{The tournaments $H^{1}_6, \dots,  H^{14}_6$.}
\label{fig:H6}
\end{figure}

\subsection{Blow-ups}
\label{subs:arg1}

We start this subsection with the following statement,
which can also be found in~\cite{BucLSS19}.
Let $n(H,S)$ be the number of copies of a tournament $H$ in a tournament $S$,
i.e., $n(H,S)=d(H,S)\cdot\binom{\lvert H\rvert}{\lvert S\rvert}$.

\begin{proposition}
\label{prop:targets}
Let $H$ be a non-transitive $k$-vertex tournament.
If there exists an $s$-vertex tournament $S$, $s>k$, such that $n(H,S) \geq s^k\cdot 2^{-\binom{k}{2}}$, 
then $H$ is not quasirandom-forcing.
\end{proposition}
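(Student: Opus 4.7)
The plan is to apply Proposition~\ref{prop:labelledineq} to the blow-up of $S$. Concretely, I will take $A$ to be the $s\times s$ adjacency matrix of $S$ with $1/2$ on the diagonal and $w=(1/s,\ldots,1/s)$ the uniform stochastic vector. Since $s>k\geq 3$ and $S$ is a tournament, $A$ has off-diagonal entries in $\{0,1\}$ and therefore is not identically $1/2$, so the qualitative side-hypothesis of Proposition~\ref{prop:labelledineq} is automatic. The whole task then reduces to establishing the quantitative bound $d^{*}(H,A)\geq 2^{-\binom{k}{2}}$.

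The main step is to lower-bound the sum defining $d^{*}(H,A)$ by keeping only the terms indexed by injective $f\colon V(H)\to V(S)$ that realise an isomorphism of $H$ onto $S[f(V(H))]$. For any such $f$, every edge $\overrightarrow{v_iv_j}$ of $H$ is sent to a real edge of $S$, so $A_{f(i),f(j)}=1$, and the corresponding term contributes exactly $s^{-k}$. All the remaining terms in the sum are products of entries of $A\in[0,1]$, hence non-negative and safely discarded. The key combinatorial observation is that for each of the $n(H,S)$ copies of $H$ in $S$ there are exactly $|\Aut(H)|$ injective maps realising it, since two such maps realise the same copy if and only if they differ by pre-composition with an automorphism of $H$. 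Collecting these contributions and applying the hypothesis yields
\[ d^{*}(H,A)\;\geq\;\frac{|\Aut(H)|\cdot n(H,S)}{s^{k}}\;\geq\;|\Aut(H)|\cdot 2^{-\binom{k}{2}}\;\geq\; 2^{-\binom{k}{2}}, \]
and Proposition~\ref{prop:labelledineq} then gives that $H$ is not quasirandom-forcing.

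There is no real obstacle in the argument; the only point that requires a moment of care is the correct count of $|\Aut(H)|$ injective maps per copy of $H$ (as opposed to e.g.\ $k!/|\Aut(H)|$). It is precisely the non-negativity of the entries of $A$ that lets us discard the contributions of non-injective $f$ and convert a statement about ordinary induced-subtournament counts inside a concrete host $S$ into the tournamenton density estimate required by Proposition~\ref{prop:labelledineq}.
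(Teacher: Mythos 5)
Your proof is correct and follows essentially the same route as the paper: apply Proposition~\ref{prop:labelledineq} to the blow-up of $S$ (adjacency matrix of $S$ with $1/2$ on the diagonal, uniform weights) and discard the non-negative terms not coming from embeddings of $H$. Your count even yields the slightly sharper bound $d^*(H,A)\geq \lvert\Aut(H)\rvert\cdot n(H,S)\cdot s^{-k}$, whereas the paper settles for $d^*(H,A)\geq n(H,S)\cdot s^{-k}$, which already suffices.
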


\begin{proof}
Let $A$ be the adjacency matrix of $H$ with $1/2$ on its diagonal.
Note that $d^*(H,A)\ge n(H,S)\cdot s^{-k}$.
Since $n(H,S)\ge s^k\cdot 2^{-\binom{k}{2}}$,
Proposition~\ref{prop:labelledineq} yields that $H$ is not quasirandom-forcing.
\end{proof}

We consider tournaments $S_7$, $S_{11}$ and $S_{15}$ with $7$, $11$ and $15$ vertices, respectively;
we remark that the tournaments $S_{11}$ and $S_{15}$ have been identified by a heuristic computer search
maximizing the number of copies of tournaments $H^i_6$.
\begin{align*}
S_7: & [001011,00101,0010,001,00,0], \\
S_{11}: & [1100110001,101001011,11010101, 0001101,100011, 00110,1000,100,10,0], \\
S_{15}: & [01010100100110,0011110000001,010001001101,10011000010,1011101010, \\
& 110110010,11101001,1110001,010110,11110,0101,001,10,0].
\end{align*}
The tournament $S_7$ is depicted in Figure~\ref{fig:5in7}.
It is interesting to note that $n(H_5,S_7)=21$,
i.e., every $5$-tuple of vertices of $S_7$ induces $H_5$, and
the tournaments $S_{7}$ and $S_{11}$ are Paley tournaments~\cite{ErdR63,GraS71,SacH62}.
In particular, the adjacency matrix of $S_7$ is the incidence matrix of the points and lines of the Fano plane.
Since $n(H_5,S_7)=21$,
Proposition~\ref{prop:targets} implies that $H_5$ is not quasirandom-forcing.
\begin{figure}
\begin{center}
\epsfbox{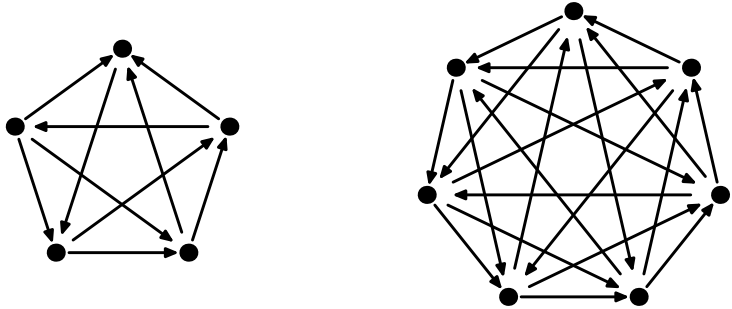}
\end{center}
\caption{The tournaments $H_5$ and $S_7$.}
\label{fig:5in7}
\end{figure}
It also holds that
$n(H^{i}_6,S_{11})=55$ for $i \in \{2,3,4,8,10,11,13\}$ and
$n(H^{i}_6,S_{15})=357$ for $i \in \{5,12\}$.
Proposition~\ref{prop:targets} implies that
none of the tournaments $H^i_6$, $i\in\{2,3,4,5,8,10,11,12,13\}$,
are quasirandom-forcing.

\subsection{Step tournamentons with variable weights}
\label{subs:arg2}

It remains to analyze the tournaments $H^i_6$ for $i\in\{1,6,7,9,14\}$.
We consider the following three tournament matrices, each of which is a function of $x\in [-1/2,1/2]$, and
show that there exists $x\not=0$ such that Proposition~\ref{prop:labelledineq} can be applied.\begin{align*}
A_x &=
\begin{pmatrix}
1/2 & 1/2-x  \\
1/2+x & 1/2  \\
\end{pmatrix}, \\
B_x &=
\begin{pmatrix}
1/2 & 1/2-x & 1/2+x \\
1/2+x & 1/2 & 1/2-x \\
1/2-x & 1/2+x & 1/2 \\
\end{pmatrix}, \\
C_x &=
\begin{pmatrix}
1/2 & 1/2-x & 1/2+x & 1/2-x \\
1/2+x & 1/2 & 1/2-x & 1/2-x \\
1/2-x & 1/2+x & 1/2 & 1/2-x \\
1/2+x & 1/2+x & 1/2+x & 1/2 \\
\end{pmatrix}.
\end{align*}
We next compute the densities of $H_6^{14}$, $H_6^9$ and $H_6^6$.
\begin{align*}
d^*(H_6^{14},A_x)&=
 \frac{1}{32768} + \frac{x^2}{8192} - \frac{5x^4}{16384} - \frac{9x^6}{4096} - \frac{7x^8}{4096},\\
d^*(H_6^{9},B_{x})&=
\frac{1}{32768} + \frac{x^4}{3072} -\frac{x^6}{216} -\frac{5x^8}{5184} + \frac{13x^{10}}{486} - \frac{x^{12}}{324},\\
d^*(H_6^{6},C_{x})&=
 \frac{1}{32768} + \frac{3x^3}{32768} -\frac{81x^4}{131072} - \frac{3x^5}{8192} 
 + \frac{27x^6}{65536} - \frac{63x^8}{131072} + \frac{15x^{12}}{1024}.
\end{align*}
The maximum of each of the three polynomials above is larger than $2^{-15}\approx 0.000030518$.
In particular, the first one is larger than $0.000037337$ for $x=0.30721$,
the second is larger than $0.000030757$ for $x=0.21740$, and
the third is larger than $0.000030544$ for $x=0.10418$.
Hence, Proposition~\ref{prop:labelledineq} yields that
none of the tournaments $H_6^{14}$, $H_6^{9}$ and $H_6^{6}$ are quasirandom-forcing.
Since the tournament $H_6^{7}$ can be obtained from $H_6^{9}$ by reversing the orientation of all its edges,
it follows that $d^*(H_6^{9},B_{x})=d^*(H_6^{7},B_{-x})$.
Similarly, the tournament $H_6^{1}$ can be obtained from $H_6^{6}$ by reversing the orientation of all its edges and
$d^*(H_6^{6},C_{x})=d^*(H_6^{1},C_{-x})$.
Hence, $d^*(H_6^{7},B_{-x})>2^{-15}$ for $x=0.21740$ and $d^*(H_6^{1},C_{-x})>2^{15}$ for $x=0.10418$, and
neither $H_6^{7}$ nor $H_6^{1}$ is quasirandom-forcing by Proposition~\ref{prop:labelledineq}.

\bibliographystyle{bibstyle}
\bibliography{qtourn}

\Addresses

\end{document}